\numberwithin{equation}{section} \makeatletter\@addtoreset{equation}{section}
\newcommand{\nus}{\nu}
\newcommand{\C}{\mathbb C}      \newcommand{\R}{\mathbb R}
\newcommand{\bz}{\overline{z}} \newcommand{\bxi}{\overline{\xi}}
  \newcommand{\scal}[1]{\left<#1\right>}  
\newtheorem {theorem}{Theorem}[section]            
        \newtheorem {remark}[theorem]{Remark}
\newtheorem {proposition}[theorem]{Proposition}       
\newcommand{\Hds}{\mathscr{H}^{2,s}(\C)}
\newcommand{\Ldnus}{\mathcal{L}^{2,\nus}(\C)}
\newcommand{\Xs}{\mathcal{X}_{s}(\C)}
\newcommand{\Xns}{\mathcal{X}_{n,s}(\C)}
\newcommand{\Fdnus}{\mathcal{F}^{2,\nus}(\C)}
\newcommand{\Fdnusn}{\mathcal{F}^{2,\nus}_n(\C)}
\newcommand{\Fdnusz}{\mathcal{F}^{2,\nus}_0(\C)}
\wang\newsavebox{\wangtext} \newdimen\wangspace
 \def\wheel#1{\savebox{\wangtext}{#1}%
 \wangspace\wd\wangtext\advance\wangspace by
 .5cm%
 \centerline{%
 \rule{0pt}{\wangspace}%
 \rule[-\wangspace]{0pt}{\wangspace}%
 \wang=-180\loop\ifnum\wang<180
 \rlap{\begin{rotate}{\the\wang}%
 \rule{.4cm}{.1pt}#1\end{rotate}}%
 \advance\wang by 20\repeat}}
\definecolor{gris}{gray}{0.5}
\begin{document}


\title[]{The orthogonal complement of the Hilbert space  associated to holomorphic Hermite polynomials}
\author{A. Benahmadi}  \email{abdelhadi.benahmadi@gmail.com}
\author{A. Ghanmi} \email{ag@fsr.ac.ma}
\author{M. Souid El Ainin} \email{msouidelainin@yahoo.fr}
\address{A.G.S.-L.A.M.A., CeReMAR, Department of Mathematics,
	\newline P.O. Box 1014,  Faculty of Sciences,
	\newline Mohammed V University in Rabat, Morocco}

\begin{abstract}
We study the orthogonal complement of the Hilbert subspace considered by by van Eijndhoven and Meyers in \cite{Van1990} and associated to holomorphic Hermite polynomials. 
A polyanalytic orthonormal basis is given and the explicit expressions of the corresponding reproducing kernel functions and Segal--Bargmann integral transforms are provided. 
\end{abstract}

\maketitle

 \section{Introduction}

 The known Hermite polynomials and their different generalizations have been one of the most interesting fields for research, since their introduction by Lagrange and Chebyshev. 
 They appear in a wide spectrum of research domains including  enginery, pure and applied mathematics, and different branches of physics.  The classical ones on the real line $\R$ are defined by (\cite{Rainville71,Szego75,Thangavelu93})
 $$ H_m(x) = (-1)^m e^{x^2} \partial_x^m e^{-x^2} 
 = m!\sum_{k=0}^{[m/2]} \frac{(-1)^k}{k!} \frac{(2x)^{m-2k}}{(m-2k)!}  .$$
 Here and elsewhere after, we use $\partial_x$ to denote the partial differential operator $\partial/\partial_x$. 
 They can be extended to the whole complex plane $\C$ by replacing the real $x$ by the complex variable $z$, leading to the class of holomorphic Hermite polynomials $ H_m(z)$. The last ones inherit the most of the algebraic properties of $H_m(x)$ by analytic continuation. Moreover, they possess further interesting analytic properties.
 The associated functions
    \begin{align}\label{onset}
 \psi^s_m (z) =
 \left( \frac{1-s}{\pi \sqrt{s}} \right)^{1/2} \left(\frac{1-s}{1+s}\right)^{m/2}\frac{e^{-\frac{z^2}{2}}}{\sqrt{2^m m!}}
 H_m(z) ,
 \end{align}
  for given fixed $ 0<s<1$, satisfy the orthogonal property (\cite{Van1990})
  \begin{align}\label{orthRelation}
 \int_{\C}\psi^s_n (z)\overline{ \psi^s_m(z)}e^{-\frac{1-s^2}{2s}|z|^2}e^{\frac{1+s^2}{4s}(z^2+\overline{z}^2)}
 d\lambda(z) = \delta_{n,m},
\end{align}
where $d\lambda(z)=dxdy$ being the Lebesgue measure on $\C\equiv \R^2$. This is to say that the functions $\psi^s_n (z)$ in \eqref{onset} form an orthonormal system in the Hilbert space $\Hds:=L^2(\C,\omega_s d\lambda)$, where the weight function $\omega_s$ is given by $$ \omega_s(z,\bz)
=e^{\frac{1+s^2}{4s}(z^2+\overline{z}^2)-\frac{1-s^2}{2s}|z|^2}
.$$
Equivalently, if $M_\alpha$ denotes the multiplication operator
\begin{align}\label{MultOpMs}
[ M_\alpha f](z) := M_\alpha(z) f(z)=  e^{\frac{1+s^2}{4s} z^2 }f(z) , \quad M_\alpha(z):= e^{\frac{1+s^2}{4s} z^2 },
\end{align}
with $\alpha=\alpha_s =\frac{1+s^2}{4s}$, then the functions
   \begin{align}\label{onset2}
\widetilde{\psi}^s_m (z) = [ M_\alpha \psi^s_m ](z)
\end{align}
 form an orthonormal system in $\Ldnus:= L^2(\C,e^{-\nus |z|^2}d\lambda)$, where
 $ \nu={\nu_s}= \frac{1-s^2}{2s}$.
Accordingly, we define the  Hilbert subspace $\Xs$ as in \cite{Van1990} by $\Xs 
=\mathcal{H}ol\cap \Hds$. Its companion
$\Fdnus
=\mathcal{H}ol\cap \Ldnus
=  M_\alpha (\Xs)$ is the classical Bargmann--Fock space of weight ${\nus}$ (see e.g. \cite{BenahmadiG2018,Folland89}).

The aim of the present paper is three
folds

\begin{enumerate}	
\item[1] Review and complete the study of the space $\Xs$. In particular, we provide the associated Segal--Bargmann transforms for the configuration space $L^2(\R)$. See Section 2.

\item[2] Study 
a Hilbertian decomposition of $\Hds$ in terms of some reproducing kernel Hilbert subspaces $\Xns$, and provide to each $\Xns$ an orthonormal basis generalizing the ones in \eqref{onset2} to the polyanalytic setting, as well as the explicit expression of the reproducing kernel of $\Xns$.  See Section 3.

\item[3]  We also give the corresponding Segal--Bargmann integral transform. See Section 3.


\end{enumerate}


\section{Complements on $\Xs$ }

We begin with the following

\begin{proposition}[\cite{Van1990}]\label{RepKer0}
	The functions $\psi^s_n $ constitute an orthonormal basis of the reproducing kernel Hilbert space  $\Xs$ with kernel given explicitly by
	   \begin{align}\label{RepKern} K^s(z,w)=\frac{1-s^2}{2\pi s} e^{-\frac{1+s^2}{4s}(z^2+\overline{w}^2)+\frac{1-s^2}{2s}z\overline{w}}.
	      \end{align}
\end{proposition}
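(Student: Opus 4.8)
The plan is to transport the problem, via the unitary multiplication operator $M_\alpha$ of \eqref{MultOpMs}, to the classical Bargmann--Fock space $\Fdnus$, and to pin down the reproducing kernel by a Mehler-type summation. First, each $\psi^s_n$ is entire (it is $e^{-z^2/2}$ times the polynomial $H_n$) and lies in $\Hds$ by the case $n=m$ of \eqref{orthRelation}, so $\psi^s_n\in\Xs=\mathcal{H}ol\cap\Hds$; and \eqref{orthRelation} says precisely that $\set{\psi^s_n}_{n\geq0}$ is orthonormal in $\Xs$. It then remains to show that (i) $\Xs$ is a reproducing kernel Hilbert space, (ii) this orthonormal family is complete in $\Xs$, and (iii) the reproducing kernel is \eqref{RepKern}.

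For (i) I would use the standard sub-mean-value estimate: for holomorphic $f$ the function $\abs{f}^2$ is subharmonic, so $\abs{f(z)}^2\leq(\pi r^2)^{-1}\int_{\abs{w-z}<r}\abs{f(w)}^2\,d\lambda(w)$, and since $\omega_s$ is continuous and strictly positive it is bounded below on $\set{\abs{w-z}<r}$, giving $\abs{f(z)}\leq C_z\norm{f}_{\Hds}$; hence point evaluations on $\Xs$ are bounded. Alternatively one may simply quote that the Fock space $\Fdnus$, of weight $\nus=\frac{1-s^2}{2s}$, is an RKHS and transport this along $M_\alpha$.

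For (iii), the crucial remark is the factorization $\omega_s(z,\bz)=\abs{M_\alpha(z)}^2e^{-\nus\abs{z}^2}$ with $\alpha=\frac{1+s^2}{4s}$, which makes $M_\alpha\colon\Hds\to\Ldnus$ a unitary isomorphism sending $\Xs$ onto $\Fdnus$ and $\psi^s_n$ to $\widetilde\psi^s_n$ of \eqref{onset2}. One can then transport the classical Fock kernel $\tfrac{\nus}{\pi}e^{\nus z\overline{w}}$ back through $M_\alpha$, obtaining $K^s(z,w)=\overline{M_\alpha(w)}^{-1}M_\alpha(z)^{-1}\tfrac{\nus}{\pi}e^{\nus z\overline{w}}=\tfrac{\nus}{\pi}e^{-\alpha(z^2+\overline{w}^2)+\nus z\overline{w}}$, which is \eqref{RepKern}. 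More self-containedly, one computes the diagonal sum $\sum_{n\geq0}\psi^s_n(z)\overline{\psi^s_n(w)}$ directly: since the $H_n$ have real coefficients, $\overline{H_n(w)}=H_n(\overline{w})$, and Mehler's formula $\sum_{n}\frac{H_n(u)H_n(v)}{2^nn!}t^n=(1-t^2)^{-1/2}\exp\!\big(\frac{2uvt-(u^2+v^2)t^2}{1-t^2}\big)$ applied with $t=\frac{1-s}{1+s}$, after routine simplification of the prefactor $\big(\frac{1-s}{\pi\sqrt s}\big)(1-t^2)^{-1/2}=\frac{1-s^2}{2\pi s}$ and of the exponent, reproduces the right-hand side of \eqref{RepKern}; the series converges locally uniformly on $\C\times\C$, which justifies the manipulation.

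For (ii), this last diagonal sum, being the locally uniform limit just computed, is by construction the reproducing kernel of the closed span $V:=\overline{\mathrm{span}}\set{\psi^s_n}\subseteq\Xs$. Pushing through $M_\alpha$, the closed span of $\set{\widetilde\psi^s_n}$ in $\Fdnus$ then has reproducing kernel $M_\alpha(z)\overline{M_\alpha(w)}\,K^s(z,w)=\tfrac{\nus}{\pi}e^{\nus z\overline{w}}$, which is precisely the kernel of $\Fdnus$ itself; since an RKHS is determined by its kernel and a closed subspace sharing the ambient kernel must be the whole space, we get $\overline{\mathrm{span}}\set{\widetilde\psi^s_n}=\Fdnus$, hence $V=\Xs$. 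Thus $\set{\psi^s_n}$ is an orthonormal basis of $\Xs$ with reproducing kernel \eqref{RepKern}. The one point needing care — rather than genuine difficulty — is (ii): ensuring that the Mehler series really represents the reproducing kernel of $\Xs$ (its convergence, and the identification of the span's kernel with the Fock kernel), since the orthonormality is already granted by \eqref{orthRelation} and the kernel identity is a finite computation.
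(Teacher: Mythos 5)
Your proposal is correct and follows essentially the same route as the paper: transporting the problem to the Bargmann--Fock space $\Fdnus$ via the unitary multiplication operator $M_\alpha$ and evaluating a bilinear (Mehler/generating-function) sum to identify the kernel \eqref{RepKern}. The only difference is one of thoroughness rather than method --- you spell out the RKHS property, the completeness of $\set{\psi^s_n}$, and the justification of the kernel summation, all of which the paper leaves implicit (deferring to \cite{Van1990} and to its Remark on transported kernels).
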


  
\begin{proof}
   The proof of \eqref{RepKern} can be handled by invoking the unitary operator $M_\alpha$ in \eqref{MultOpMs} and observing that the functions 
   \begin{align}\label{scaledbasis}
   \phi^s_m(z) = \frac{1}{\sqrt{\pi m!}} \left(\frac{1-s^2}{2s}\right)^{(m+1)/2}           
   e^{-\frac{1+s^2}{4s}z^2}z^m \end{align}
   form an orthonormal basis of $\Xs$,
  so that  one concludes for the explicit expression of $ K^s(z,w)$ by performing
     $ K^s(z,w)=\sum\limits_{m=0}^{+\infty} \phi^s_m(z)\overline{\phi^s_m(w)}$ and next using the generating function of the Hermite polynomials $H_n(z)$ (\cite[p. 130]{Rainville71}).
\end{proof}

%


 \begin{remark}\label{RemRepKer}
 	 The expression of the reproducing kernel can also be proved in an easy way by making appeal to the following general principle. Let $\mathcal{H}$ be a separable reproducing kernel Hilbert space (RKHS) on the complex plane and denotes by $K^{\mathcal{H}}$ its reproducing kernel function. If $M$ is a multiplication operator by a function
 	 $M(z):= e^{\psi(z)}$. 
 	 Then,
 	 $\mathcal{H}'= M\mathcal{H}$ is a RKHS whose kernel function is given by
     \begin{align}\label{RemRepKerF}
      K^{\mathcal{H}'}(z,w)=e^{\psi(z)} K^{\mathcal{H}} (z,w) e^{\overline{\psi(w)}}.
 	 \end{align}
 \end{remark}

%
 \begin{remark}
  The space $\mathop{\cup}\limits_{0<s<1}
   \Xs=S^{1/2}_{1/2}$ is the Gelfand--Shilov space (of holomorphic functions) extended to $\C$ (see \cite{Van1990}).
 \end{remark}


In the sequel, we consider the  integral transform of Segal--Bargmann type 
 \begin{align}\label{SBT}
 [\mathscr{B}_s f] (z) := \int_{\R}  B_s(t,z) f(t) dt
  \end{align}
   associated to the kernel function 
        \begin{align}\label{KerFct}
 B_s(t,z) :=
 \left( \frac{1-s^2}{2\pi s\sqrt{s \pi}} \right)^{1/2}  
\exp\left( -  \frac{1}{2s} t^2 -  \frac{1}{2s} z^2
+ \frac{\sqrt{1-s^2}}{s}  tz \right) .
 \end{align}
 Then, we assert

 \begin{theorem} \label{thmSBT}
 	The transform $\mathscr{B}_s$ defines a unitary isometric integral transform from the configuration Hilbert space $L^{2}(\R)$  onto $\Xs$.
 \end{theorem}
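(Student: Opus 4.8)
The plan is to show that $\mathscr{B}_s$ carries an orthonormal basis of $L^{2}(\R)$ onto an orthonormal basis of $\Xs$; combined with Proposition \ref{RepKer0} this gives the claim. Concretely, I would take the Hermite functions
\[
 h_n(t) = \Big(\tfrac{1}{2^n n!\sqrt{\pi}}\Big)^{1/2} H_n(t)\, e^{-t^2/2}, \qquad n\geq 0,
\]
which are well known to form an orthonormal basis of $L^{2}(\R)$, and prove the key identity $[\mathscr{B}_s h_n](z)=\psi^s_n(z)$ for every $n$, with $\psi^s_n$ as in \eqref{onset}. Since by Proposition \ref{RepKer0} the family $\{\psi^s_n\}_n$ is an orthonormal basis of $\Xs$, this exhibits $\mathscr{B}_s$ as a densely defined linear isometry with dense range, hence (after settling boundedness, see below) a unitary transform onto $\Xs$.

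For the key identity, first note that for $f\in L^{2}(\R)$ the integral \eqref{SBT} converges absolutely: writing $a=\tfrac{1}{2s}$, $b=\tfrac{\sqrt{1-s^2}}{s}$ so that $B_s(t,z)=C\, e^{-a t^2-a z^2+b tz}$, the Cauchy--Schwarz inequality gives $\abs{[\mathscr{B}_s f](z)}\leq \norm{f}_{L^{2}(\R)}\,\norm{B_s(\cdot,z)}_{L^{2}(\R)}<\infty$, and $z\mapsto[\mathscr{B}_s f](z)$ is entire by differentiation under the integral sign. To evaluate $[\mathscr{B}_s h_n](z)$ I would multiply by $u^n/n!$, sum over $n$, and use the generating function $\sum_n H_n(t)u^n/n!=e^{2ut-u^2}$; this reduces the computation to the Gaussian integral $\int_\R e^{-\beta t^2+\gamma t}\,dt=\sqrt{\pi/\beta}\;e^{\gamma^2/(4\beta)}$ with $\beta=a+\tfrac12=\tfrac{1+s}{2s}$. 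After simplification the summed expression factors as a constant times $e^{-z^2/2}\sum_n H_n(z)\,\big(\tfrac{1-s}{1+s}\big)^{n/2}u^n/n!$, and comparing the coefficients of $u^n$ produces exactly $\psi^s_n(z)$, the normalization $C$ having been chosen precisely so that the constants match. As an alternative one may bypass the basis altogether and verify the coherent‑state identity $\int_\R B_s(t,z)\overline{B_s(t,w)}\,dt = K^s(z,w)$ with $K^s$ as in \eqref{RepKern}: by the reproducing‑kernel principle recalled in Remark \ref{RemRepKer} this identifies the range of $\mathscr{B}_s$ with $\Xs$, while the isometry reduces to completeness of $\{\overline{B_s(\cdot,z)}\}_z$ in $L^{2}(\R)$, which holds because $\int_\R B_s(t,z)f(t)\,dt\equiv 0$ forces the entire function $z\mapsto\int_\R e^{-at^2}f(t)e^{btz}\,dt$ to vanish, hence $e^{-at^2}f=0$ and $f=0$.

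It then remains to promote the identity from finite combinations of the $h_n$ to all of $L^{2}(\R)$. Given $f=\lim_k f_k$ in $L^{2}(\R)$ with each $f_k$ a finite linear combination of Hermite functions, the isometry on such combinations shows $(\mathscr{B}_s f_k)_k$ is Cauchy in $\Xs$, hence converges to some $g\in\Xs$ with $\norm{g}_{\Xs}=\norm{f}_{L^{2}(\R)}$; since norm convergence in the reproducing kernel space $\Xs$ forces pointwise convergence and, by the Cauchy--Schwarz bound above, $\mathscr{B}_s f_k(z)\to \mathscr{B}_s f(z)$ pointwise, we conclude $g=\mathscr{B}_s f\in\Xs$ and the isometry extends. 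Finally, the range of $\mathscr{B}_s$ is closed (being the isometric image of a Hilbert space) and contains every $\psi^s_n$, hence is all of $\Xs$, so $\mathscr{B}_s$ is unitary from $L^{2}(\R)$ onto $\Xs$. I expect the only genuinely delicate point to be this last closure/density step reconciling the a priori only pointwise-defined integral operator with its $L^{2}$-isometric extension; the remaining ingredients are routine Gaussian integrals and the generating function of the $H_n$.
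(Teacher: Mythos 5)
Your proposal is correct and follows essentially the same route as the paper: both arguments rest on showing that $\mathscr{B}_s$ sends the Hermite-function orthonormal basis $f_m$ of $L^{2}(\R)$ to the orthonormal basis $\psi^s_m$ of $\Xs$ from Proposition \ref{RepKer0}, which the paper encodes as the kernel expansion $B_s(t,z)=\sum_m f_m(t)\psi^s_m(z)$ verified via Mehler's formula \eqref{MehlerkernelHnsigma}. Your term-by-term evaluation of $[\mathscr{B}_s h_n]$ via the generating function and a Gaussian integral is just the unsummed version of that same Mehler computation, and your extra care with the extension from finite linear combinations to all of $L^{2}(\R)$ merely fills in the step the paper dismisses as straightforward.
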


 \begin{proof}
The kernel function $ B_s(t,z)$ in \eqref{KerFct} can be rewritten as
       \begin{align}\label{KerFctExp}
B_s(t,z) := \sum_{m=0}^\infty f_m(t)  \psi^s_m (z), 
  \end{align}
  where
   \begin{equation}\label{Orthonbasis}
  f_m(t) =   \frac{e^{-\frac{t^2}2}}{\sqrt{2^m  m!\sqrt{\pi} }}    H_m(t)
  \end{equation}
  is an orthonormal basis of $L^{2}(\R)$.
  Indeed, we have 
   \begin{align*}\label{KerFct}
  \sum_{m=0}^\infty f_m(t)  \psi^s_m (z)
  &=  \left( \frac{1-s}{\pi \sqrt{s \pi}} \right)^{1/2}  
  e^{-\frac{1}{2}(t^2+z^2)}
  \sum_{m=0}^\infty   \left(\frac{1-s}{1+s}\right)^{m/2}
  \frac{H_m(t) H_m(z)}{2^m m!}.
  \end{align*}
   The rest of the proof is straightforward making use of the Mehler formula for the Hermite polynomials extended to the complex plane, to wit (\cite[p.174, Eq. (18)]{Mehler1866}, see also \cite[p.198, Eq. (2)]{Rainville71})
 \begin{equation}\label{MehlerkernelHnsigma}
 \sum_{m=0}^\infty \frac{\lambda^m }{2^m  m!}  H_m (t) H_m (z)
 = \frac{1}{\sqrt{1 - \lambda^2}}  \exp\left( \frac{- \lambda^2 (t^2 + z^2) + 2 \lambda tz  }{1 - \lambda^2}  \right)
 \end{equation}
 valid for every fixed  $0<\lambda<1$.
  \end{proof}

\begin{remark}
	By means of \eqref{SBT} and \eqref{KerFctExp}, we have
$
[\mathscr{B}_s f_m] (z) = \psi^s_m (z).
$ 
Moreover, the inversion formula of $\mathscr{B}_s$ is given by 
$$
[\mathscr{B}_s^{-1} \varphi] (t) = \int_{\C}  \varphi (z)  B_s(t,\bz) \omega_s(z,\bz) d\lambda(z).
$$
\end{remark}

 \begin{remark}
	By considering $\widetilde{B_s}(t,z):=  s^{1/4} B_s(s^{1/2} t,z) $, we define an integral transform $\widetilde{\mathscr{B}}_s$ from  $L^{2}(\R)$  onto $\Xs$ such that $[\widetilde{\mathscr{B}}_s f_mn] (z) = \phi^s_m (z)$, where $\phi^s_m$ are as in \eqref{scaledbasis}, since
	$$\widetilde{B_s}(t,z) = \sum_{m=0}^\infty f_m(t) \phi^s_m(z).$$
\end{remark}

 \section{A special orthonormal basis of $\Hds$}
 	
 	
 	
 The multiplication operator $M_\alpha:  f \longmapsto  M_\alpha f=e^{\alpha z^2}f $ defines a unitary operator from $\Hds$ onto $\Ldnus$. Moreover, it maps isometrically the Hilbert subspace $\Xs$ onto the Bargmann--Fock space $\Fdnus$. Therefore, an orthogonal decomposition of
 $\Hds$ can be deduced easily from the one
 of $\Ldnus$,  
  $ \Ldnus = \bigoplus_{n=0}^\infty\Fdnusn,
  $ 
   given in terms of the polyanalytic Hilbert spaces
  \begin{align*}
\Fdnusn= Ker|_{\Hds} \left( \Delta_{\nu} - n{\nus} Id \right)
   \end{align*}
 where $\Delta_{\nu} := -\partial_z\partial_{\bz}+\nu\bz \partial_{\bz}$ and with $ \Fdnusz=  \Fdnus$.
 See for e.g. \cite{GhIn2005JMP} for details. In fact, the consideration of $\Xns :=M_{-\alpha}\Fdnusn$ leads to the orthogonal decomposition
   \begin{align*} \Hds = \bigoplus_{n=0}^\infty \Xns.
\end{align*}
An immediate orthonormal basis of $\Xns$ is then given by
$ e^{-\alpha z^2} H_{m,n}^{{\nus}}(z,\bz)$
for varying $m,n=0,1,2,\cdots $, where
$$H_{m,n}^{{\nus}}(z,\bz) := (-1)^{m+n} e^{{\nus}|z|^2} \partial_{\bz}^m \partial_z^n \left( e^{-{\nus}|z|^2}\right) $$
 denotes the weighted polyanalytic complex Hermite polynomials \cite{Gh13ITSF,Gh2017,Ito52}, generalizing the monomials ${\nus}^m z^m=H_{m,0}^{{\nus}}(z,\bz)$.

The main aim in this section is to provide another "nontrivial" orthonormal basis $\psi^s_{m,n}(z,\bz)$ of $\Hds$, consisting of polyanalytic functions generalizing $\psi^s_m $ and whose first elements are the holomorphic functions $\psi^s_m (z)$ in \eqref{onset2}, i.e., $\psi^s_{m,0}(z,\bz)=\psi^s_m (z)$.
and obtained an appropriate basis of the space $\Xns$. The introduction of $\Xns$ entails the consideration of the integral transform 
   \begin{align*}
  [\mathscr{W}^s_{n} f] (z,\bz) =   \left(\frac{\nu}{\pi}\right) 
   \left(\frac{\nu^n}{n!}\right)^{1/2} 
  e^{-\alpha z^2}
   \int_{\C} e^{-\nu|\xi|^2 + \alpha \xi^2 + {\nus} \overline{\xi} z }  (\bz-\bxi)^n    \psi(\xi)  d\lambda(\xi).
\end{align*}
Then, we can prove the following 

 \begin{theorem}\label{OrthBasis} The transform $\mathscr{W}^s_{n}$ is a unitary integral transform from $\Xs $ onto $\Xns$. Moreover, the functions 
 	 \begin{align}\label{nesfct}
\psi^s_{m,n}(z,\bz)=
\left( \frac{1-s}{\pi\nu^n n!  \sqrt{s}} \right)^{1/2} 
 \left(\frac{1-s}{1+s}\right)^{m/2} 
\frac{e^{-\frac{z^2}{2}}}{\sqrt{2^m m!} } \left( \nabla_{\nu,\alpha-\frac 12}^n H_{m}\right) (z)  ,	  
\end{align}
 	where $\nabla_{\nu,\alpha} :=  -\partial_z +\nu \bz -2\alpha z$, form an orthonormal basis of $\Xns$.	
 \end{theorem}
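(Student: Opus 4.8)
The plan is to exploit the unitary intertwining operator $M_\alpha$ exactly as in the holomorphic case of Proposition~\ref{RepKer0} and Theorem~\ref{thmSBT}, transporting everything to the Bargmann--Fock picture $\Fdnusn$ where the polyanalytic structure is well understood. First I would record the commutation identity linking the ``creation-type'' operator $\nabla_{\nu,\alpha}$ with multiplication by $M_\alpha$: since $M_{-\alpha}\circ(-\partial_z)\circ M_\alpha = -\partial_z - 2\alpha z$ and $\bz$ commutes with $M_{\pm\alpha}$, one gets
\begin{align*}
M_{-\alpha}\circ(-\partial_z+\nu\bz)\circ M_\alpha = -\partial_z + \nu\bz - 2\alpha z = \nabla_{\nu,\alpha}.
\end{align*}
Writing $A_\nu := -\partial_z+\nu\bz$ for the standard creation operator on $\Ldnus$, this says $\nabla_{\nu,\alpha-\tfrac12}^{\,n} = M_{-\alpha}\, A_\nu^n\, M_\alpha$ up to the shift in the second subscript (the $-\tfrac12$ coming from the extra $e^{-z^2/2}$ factor built into $\psi^s_m$, so that $M_{\alpha-1/2}(z) = e^{(\alpha-1/2)z^2}$ pairs with $e^{-z^2/2}$). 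This reduces the claimed formula \eqref{nesfct} to the assertion that $A_\nu^n$ applied to the known orthonormal basis $\widetilde\psi^s_m$ of $\Fdnus$ (normalized by a power of $\nu^n n!$) yields an orthonormal basis of $\Fdnusn$. That last fact is standard for the complex Hermite / polyanalytic Fock spaces (it is how the $H_{m,n}^\nu$ arise from $H_{m,0}^\nu$), and I would cite \cite{GhIn2005JMP,Gh2017} for it; concretely, $A_\nu: \Fdnusn \to \Fdnus^{(n+1)}$ maps the level-$n$ space onto the level-$(n{+}1)$ space and scales norms by $\sqrt{(n+1)\nu}$, so $A_\nu^n/\sqrt{\nu^n n!}$ is unitary from $\Fdnus$ onto $\Fdnusn$.

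For the transform $\mathscr{W}^s_n$, the natural route is to identify it, via $M_\alpha$ and $M_{-\alpha}$, with the composition $M_{-\alpha}\circ (A_\nu^n/\sqrt{\nu^n n!})\circ \Pi_{\Fdnus}\circ M_\alpha$, where $\Pi_{\Fdnus}$ is the Bergman projection of $\Ldnus$ onto $\Fdnus$ with reproducing kernel $e^{\nu z\bw}$ (times $\nu/\pi$). Indeed, inserting $\psi(\xi) = M_{-\alpha}(\xi)\widetilde\psi(\xi)$ into the integral, the factor $e^{-\nu|\xi|^2+\alpha\xi^2+\nu\bxi z}$ combines the Bergman kernel $e^{\nu\bxi z}$ with $M_\alpha(\xi)=e^{\alpha\xi^2}$ hitting the argument of $\psi$, and the $(\bz-\bxi)^n$ together with $e^{-\alpha z^2}$ and the normalizing constants is precisely $M_{-\alpha}$ times the $n$-fold action of $A_\nu$ written under the integral sign (using $A_\nu^n(e^{\nu\bxi z}) = \nu^n(z-\bar\xi)^n e^{\nu\bxi z}$ — wait, more precisely $A_\nu$ acts in the $z$ variable on the kernel producing $(\nu\bxi - \partial_z)e^{\nu\bxi z}$; one checks it produces the $(\bz-\bxi)^n$ factor after the appropriate conjugation). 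Granting this identification, unitarity of $\mathscr{W}^s_n$ from $\Xs$ onto $\Xns$ is immediate: $M_\alpha$ is unitary $\Xs\to\Fdnus$, $A_\nu^n/\sqrt{\nu^n n!}$ is unitary $\Fdnus\to\Fdnusn$, and $M_{-\alpha}$ is unitary $\Fdnusn\to\Xns$. Then $\mathscr{W}^s_n(\psi^s_m) = \psi^s_{m,n}$ follows by applying the chain to $\psi^s_m$ and comparing with the formula for $\psi^s_{m,n}$ established above, so the image of the orthonormal basis $\{\psi^s_m\}$ of $\Xs$ is the orthonormal basis $\{\psi^s_{m,n}\}_m$ of $\Xns$.

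The main obstacle I anticipate is bookkeeping the constants and the half-integer shift cleanly: keeping straight which Gaussian factor ($e^{-z^2/2}$ inside $\psi^s_m$ versus $e^{\alpha z^2}$ from $M_\alpha$ versus the $e^{-\alpha z^2}$ prefactor in $\mathscr{W}^s_n$ and in the basis element $e^{-\alpha z^2}H_{m,n}^\nu$) gets absorbed where, and verifying that the resulting operator on the Fock side is exactly $A_\nu^n$ with the stated normalization rather than some rescaled variant. A secondary point requiring care is justifying the interchange of $A_\nu^n$ (a differential operator in $z$) with the integral defining the Bergman projection — this is routine given the Gaussian decay but should be noted. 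Once the operator identity $\mathscr{W}^s_n = M_{-\alpha}\circ(A_\nu^n/\sqrt{\nu^n n!})\circ\Pi_{\Fdnus}\circ M_\alpha$ is pinned down, both assertions of the theorem are formal consequences of the corresponding facts in the Bargmann--Fock setting.
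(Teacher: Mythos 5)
Your proposal is correct and follows essentially the same route as the paper: conjugating by the unitary multiplication operator $M_{\pm\alpha}$ to pass to the Fock picture, identifying the integral transform with the normalized $n$-th power of the creation operator $\nabla_{\nu,0}=-\partial_z+\nu\bz$ (the paper packages this as $\mathscr{W}^s_n=M_{-\alpha}\mathscr{T}^{\nu}_{0,n}M_{\alpha}$ with $\mathscr{T}^{\nu}_{0,n}=(\nu^n n!)^{-1/2}\nabla_{\nu,0}^n$ on its domain, citing an earlier result rather than spelling out the Bergman projection), and then transporting the orthonormal basis $\psi^s_m$ and extracting the $-\tfrac12$ shift from the Gaussian factor via the intertwining relation $\nabla_{\nu,a}(M_\gamma\psi)=M_\gamma\nabla_{\nu,a+\gamma}\psi$.
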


  \begin{proof} The proof lies essentially on the observation that  the unitary operator $\mathscr{W}^s_{n}$ can be rewritten as $\mathscr{W}^s_{n} = M_{-\alpha} \mathscr{T}^{\nu}_{0,n} M_{\alpha}$, where $\mathscr{T}^{\nu}_{k,n}$ is the integral transform considered 
  	in \cite[Eq. (2.17)]{BenahmadiG2018} and given by
  	$$
  	[\mathscr{T}^{\nu}_{k,n}\psi](z,\bz)=
  	\left(\frac{(-1)^n \nu}{\pi \sqrt{k! n! \nu^{k+n} }}\right)
  	\int_{\C} e^{-\nu|\xi|^2 + \nu \overline{\xi} z }  H^\nu_{k,n}(\xi-z, \bxi-\bz)    \psi(\xi)  d\lambda(\xi),
  	$$
  	as well as on the fact that $\psi^s_{m,n}(z,\bz):=  [\mathscr{W}^s_{n} \psi^s_m](z,\bz)$.
  Thus, by means of \cite[Theorem 2.12]{BenahmadiG2018}, keeping in mind the fact that the polynomials $H^\nu_{m,n}(z,\bz)=:\nabla_{\nu,0}^n (z^m)$ is an orthogonal basis of $\Ldnus$ \cite{Ito52,Gh13ITSF}, 
 the following 	$$[\mathscr{T}^{\nu}_{0,n}\psi](z,\bz) =  \left(\frac{1}{ \nu^n n!}\right)^{1/2} \nabla_{\nu,0}^{n} \psi, $$
 holds true for every nonnegative integers $n$ and any $\psi \in \Ldnus\cap \mathcal{C}^{n}(\C)$. 
The rest of the second assertion is straightforward since the functions $ \psi^s_m$ form an orthonormal basis of $\Xs$.
 The explicit expression of $\psi^s_{m,n}(z,\bz)$ follows by direct computation. Indeed, we have
  	 \begin{align*}
  	 \psi^s_{m,n}(z,\bz) 
  	 &=  M_{-\alpha} \mathscr{T}^{\nu}_{0,n} M_{\alpha} \psi^s_{m}(z)
  	 \\& =
  	  \left(\frac{1}{ \nu^n n!}\right)^{1/2} M_{-\alpha} \nabla_{\nu,0}^{n} \left( M_{\alpha} \psi^s_{m}\right) (z)\\
  	   &=  
  	  \left(\frac{1}{ \nu^n n!}\right)^{1/2}  \nabla_{\nu,\alpha}^n \left(\psi^s_{m}\right) (z)
\\&=  
\left(\frac{1}{ \nu^n n!}\right)^{1/2} e^{\frac{-z^2}{2}} \nabla_{\nu,\alpha-\frac 12}^n \left(e^{\frac{z^2}{2}} \psi^s_{m}\right) (z)  	  ,
  	  \end{align*}
  	  since 
  	  $\nabla_{\nu,a} \left( M_\gamma \psi\right) = M_\gamma \nabla_{\nu,a+\gamma} \psi$ and  
  	  $ \nabla_{\nu,0}^n \left( M_\gamma \psi\right) = M_\gamma \nabla_{\nu,\gamma}^n \psi$.
 \end{proof}	



 \begin{remark}
The inverse of $ \mathscr{W}^s_{n}:  \Xs \longrightarrow \Xns$ 
is given by $[\mathscr{W}^s_{n}]^{-1}=M_{\alpha} \mathscr{T}^{\nu}_{n,0}M_{-\alpha}$, 
More explicitly
$$ [\mathscr{W}^s_{n}]^{-1}\psi(z) =
\left(  \frac{\nu} {\pi} \right) \left( \frac{\nu^{n}}{n!}\right)^{1/2}   e^{\alpha z^2 }
\int_{\C} e^{-\nu|\xi|^2 -\alpha \xi^2  + \nu \overline{\xi} z } (\xi-z)^n    \psi(\xi)  d\lambda(\xi).
$$
\end{remark}

\begin{remark}
The new class of functions in \eqref{nesfct} generalizes the one studied in \cite{BenahmadiG2019} and the previous theorem provide an integral representation of the special functions $\psi^s_{m,n}(z,\bz)$.  Moreover, it 
is closely connected to the polynomials \begin{align}\label{Datt1997}
H'_{m,n}(x,y;z,w|\tau)=m!n!\sum\limits_{k=0}^{min(n,m)}\frac{(-\tau)^k}{k!}\frac{H'_{n-k}(x,y)}{(n-k)!}\frac{H'_{m-k}(z,w)}{(m-k)!} 
\end{align}
 in \cite{DattoliLorenzuttaMainoTorre1997},
where  $H'_n(x,y):=i^n y^{\frac{n}{2}} H_n\left( \frac{x}{2 i} y^{-\frac{1}{2} }\right) .$
\end{remark} 


 The considered space  $\Xns$ is a reproducing kernel Hilbert space for the point evaluation map in $\Xns$ is continuous.  This, can be recovered easily by means of Remark \ref{RemRepKer}. Thus, we assert

\begin{theorem}\label{ExRepKer}
 The explicit expression of the reproducing kernel of $\Xns$  is given by
  $$  K^s_n(z,w)=\left(  \frac{1-s^2}{2\pi s}\right) \frac{(-1)^n}{n!\nu^n}e^{\nu z\overline{w}-\alpha(z^2+\overline{w}^2)}H^\nu_{n,n}(z-w,\overline{z}-\overline{w}).$$
   \end{theorem}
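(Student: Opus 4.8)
The strategy is to transport the known reproducing kernel of the Bargmann--Fock-type polyanalytic space $\Fdnusn$ through the unitary multiplication operator $M_\alpha$, exactly as recorded in Remark \ref{RemRepKer}. First I would recall that $\Xns = M_{-\alpha}\Fdnusn$ with $M_\alpha$ unitary from $\Hds$ onto $\Ldnus$; hence by the general principle \eqref{RemRepKerF} applied with $M(z) = M_\alpha(z) = e^{\alpha z^2}$ (so $\psi(z) = \alpha z^2$), one gets
\begin{align*}
K^s_n(z,w) = M_{-\alpha}(z)\, K^{\Fdnusn}(z,w)\, \overline{M_{-\alpha}(w)} = e^{-\alpha z^2 - \alpha \overline{w}^2}\, K^{\Fdnusn}(z,w).
\end{align*}
So everything reduces to inserting the explicit kernel of $\Fdnusn$.

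Next I would invoke the explicit expression for the reproducing kernel of the generalized Bargmann--Fock space $\Fdnusn$ in terms of the polyanalytic complex Hermite polynomials $H^\nu_{n,n}$. This is the standard formula (see \cite{GhIn2005JMP,BenahmadiG2018}), namely
\begin{align*}
K^{\Fdnusn}(z,w) = \frac{\nu}{\pi}\cdot\frac{(-1)^n}{n!\,\nu^n}\, e^{\nu z\overline{w}}\, H^\nu_{n,n}(z-w,\overline{z}-\overline{w}),
\end{align*}
which for $n=0$ recovers the classical Fock kernel $\tfrac{\nu}{\pi}e^{\nu z\overline{w}}$. Substituting this into the displayed relation above and using $\nu = \nu_s = \tfrac{1-s^2}{2s}$, so that $\tfrac{\nu}{\pi} = \tfrac{1-s^2}{2\pi s}$, yields precisely
\begin{align*}
K^s_n(z,w) = \left(\frac{1-s^2}{2\pi s}\right)\frac{(-1)^n}{n!\,\nu^n}\, e^{\nu z\overline{w} - \alpha(z^2+\overline{w}^2)}\, H^\nu_{n,n}(z-w,\overline{z}-\overline{w}),
\end{align*}
which is the claimed identity.

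To make the argument self-contained I would include a short verification that the kernel displayed for $\Fdnusn$ is indeed reproducing, rather than merely citing it: one expands $H^\nu_{n,n}(\zeta,\overline{\zeta})$ as a polynomial in $\zeta,\overline{\zeta}$, writes $K^{\Fdnusn}(z,w) = \sum_{m} h^{\nu}_{m,n}(z,\bz)\overline{h^{\nu}_{m,n}(w,\bw)}$ where $h^{\nu}_{m,n}$ is the $L^2$-normalization of $H^\nu_{m,n}$, and uses the fact (quoted in the proof of Theorem \ref{OrthBasis}) that $\{H^\nu_{m,n}\}_m$ is an orthogonal basis of $\Fdnusn$; the closed form then follows from the bilinear generating identity for the $H^\nu_{m,n}$. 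The main obstacle is purely bookkeeping: one must pin down the correct normalization constants so that the $\tfrac{\nu}{\pi}$ prefactor and the $\tfrac{(-1)^n}{n!\nu^n}$ factor come out right, and keep the conventions for $H^\nu_{n,n}$ (the $(-1)^{2n}=1$ sign, the $e^{\nu|z|^2}$ twist) consistent with those fixed earlier in Section 3. No genuinely hard analysis is involved once Remark \ref{RemRepKer} and the decomposition $\Hds = \bigoplus_n \Xns$ are in hand.
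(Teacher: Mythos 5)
Your proposal is correct and follows essentially the same route as the paper: transport the known reproducing kernel of $\Fdnusn$ (from \cite{GhIn2005JMP}) to $\Xns=M_{-\alpha}\Fdnusn$ via the general principle of Remark \ref{RemRepKer}, then substitute $\nu/\pi=(1-s^2)/(2\pi s)$. In fact your version fixes a small sign slip in the paper's write-up, which displays $M_\alpha(z)K^{\Fdnusn}(z,w)\overline{M_\alpha(w)}$ where the correct conjugation is by $M_{-\alpha}$, as your computation makes explicit.
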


  \begin{proof} By means of Remark \ref{RemRepKer}, the reproducing kernel $K^s_n(z,w)$ of $\Xns$ obeys \eqref{RemRepKerF}. Hence, we have
  $  	K^s_n(z,w)=M_\alpha(z) K^{\Fdnus} (z,w) \overline{M_\alpha(w)}$.
  	where $K^{\Fdnus_n} $ is the reproducing kernel of the generalized Bargmann space $\Fdnus_n$ given by \cite{GhIn2005JMP}
  	   	\begin{align*}K^{\Fdnus_n}_{n} (z,w)=\left( \frac{\nu}{\pi}\right)  \frac{(-1)^n}{n!\nu^n}e^{\nu z\overline{w}}H^\nu_{n,n}(z-w,\overline{z}-\overline{w}).
  	   	\end{align*}
  \end{proof}

 \begin{remark}
	For $n=0$ we recover the reproducing kernel 
	of the Hilbert space   $\Xs$  in Proposition \ref{RepKer0}. 
\end{remark}

\begin{remark}

The identity
\begin{align*}
H^\nu_{n,n}(z-w,\overline{z}-\overline{w}) = (-1)^n
e^{ \left( \alpha -\frac{1}{2}\right) (z^2+\overline{w}^2) -\nu z\overline{w} } 
\nabla_{\nu,\alpha-\frac 12}^{n_z} \overline{\nabla_{\nu,\alpha-\frac 12}^{n_w}}  
e^{- \left( \alpha-\frac 12\right)   (z^2+\overline{w}^2) +  \nus z \overline{w}  }
\end{align*}
or equivalently 
\begin{align*}
H^\nu_{n,n}(z-w,\overline{z}-\overline{w})= (-1)^n e^{\nu(|z|^2+|w|^2-z\overline{w})}\partial^n_z\partial^n_{\overline{w}}e^{-\nu(|z|^2+|w|^2 + z\overline{w})}
\end{align*}
holds true by comparing the result of Theorem \ref{ExRepKer} to the fact that the reproducing kernel $K^s_n$ can be rewritten as $K^s_n(z,w) = \sum\limits_{m=0}^{+\infty} \psi^s_{m,n}(z)\overline{\psi^s_{m,n}(w)} $, for $\{\psi^s_{m,n}(z,\bz) , m=0,1,2,\cdots\}$, in \eqref{nesfct}, being an orthonormal basis of $\Xns$. 
\end{remark}

We conclude this section by giving the explicit expression of the 
generalized Segal--Bargmann integral transform for the spaces $\Xns$. We have to consider the weighted configuration space $L^{2,\nu}(\R)$ instead of $L^{2}(\R)$, where $\nu >0$. It is the Hilbert space of all square integrable
$\C$-valued functions on $\R$ with respect to the Gaussian measure $e^{-\nu x^2}dx$, for which the rescaled Hermite polynomials 
\begin{align}\label{basis2nu} g^\nu_m(x) 
= \left(\frac{\nu}{\pi}\right)^{\frac{1}{4}} \frac{H_m(\sqrt{\nu}x)}{\sqrt{2^m m!}} 
\end{align}
form an orthonormal basis.   
The associated coherent states transform from $L^{2,\nu}(\R)$ onto $\Xns$ mapping $g^\nu_m$ to $\psi^s_{m,n}$ is given by 
  $$\mathscr{S}^s_n f(z):= 
 \scal{ f,  \overline{S^s_n(.,z)} } _{L^{2,\nu}(\R)} = \int_{\R} f(x) S^s_n(x,z)  e^{-\nu x^2} dx, $$
 where the kernel function $S^s_n(x,z)$ is given by 
  $$ S^s_n(x,z)=\sum\limits_{m=0}^{+\infty}  g^\nu_m(x) \psi^s_{m,n}(z,\overline{z}) .$$
For fixed  $a>0$, $b\in \R$ and $c\in\C$, we define $I_n^{a,b}(z,\bz|c) $ to be the class of polyanalytic polynomials in \cite{BenahmadiG2019},
$$ I_n^{a,b}(z,\overline{z}|c) := (-1)^n e^{a|z|^2 -b z^2 -c z}  \partial_z^n \left( e^{-a|z|^2 + b z^2  + c z}\right) .$$

\begin{theorem}
	We have 
	\begin{align}\label{closed}
	S^s_n(x,z)=
	\left(\frac{\nu}{\pi s}\right)^{\frac{1}{4}}
	\left( \frac{1-s^2}{2\pi s  \nu^n n! }  \right)^{1/2}
	e^{  - \frac{1}{2s} z^2 - \frac{\nu(1-s)}{2s} x^2+ \frac{\nus\sqrt{2s}}{s} x z}
	I^{\nu, -\frac \nus 2 }_n\left( z,\bz \Big | \frac{\nus\sqrt{2s}}{s} x \right) .
	\end{align}
Moreover, the transform $\mathscr{S}^s_n$ 
defines an isomtric transform from  $L^{2,\nu}(\R)$ onto $\Xns$.
\end{theorem}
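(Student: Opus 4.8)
The plan is to evaluate the defining series $S^s_n(x,z)=\sum_{m\geq 0}g^\nu_m(x)\,\psi^s_{m,n}(z,\bz)$ in closed form. Substituting \eqref{basis2nu} and \eqref{nesfct} and factoring out every $m$-independent quantity --- the scalar $(\nu/\pi)^{1/4}\left(\tfrac{1-s}{\pi\nu^n n!\sqrt s}\right)^{1/2}$, the factor $e^{-z^2/2}$, and the ($z$-)operator $\nabla_{\nu,\alpha-\frac12}^n$ --- rewrites $S^s_n(x,z)$ as that scalar times $e^{-z^2/2}\nabla_{\nu,\alpha-\frac12}^n\big[\sum_{m\geq 0}\lambda^m H_m(\sqrt\nu x)H_m(z)/(2^m m!)\big]$ with $\lambda=\sqrt{(1-s)/(1+s)}\in(0,1)$. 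The bracketed series is the left-hand side of the complex Mehler formula \eqref{MehlerkernelHnsigma} with $t=\sqrt\nu x$, and pulling $\nabla^n$ outside the sum is legitimate since $\lambda<1$ dominates the polynomial growth of the Hermite factors, so the series and its term-by-term $z$-derivatives converge locally uniformly. Using $1-\lambda^2=2s/(1+s)$, $\lambda^2/(1-\lambda^2)=(1-s)/(2s)$, $2\lambda/(1-\lambda^2)=\sqrt{1-s^2}/s$ and $\sqrt{1-s^2}=\sqrt{2s\nu}$, the Mehler sum collapses to $\Phi(x,z):=\left(\tfrac{1+s}{2s}\right)^{1/2}\exp\!\big(-\tfrac{\nu(1-s)}{2s}x^2-\tfrac{1-s}{2s}z^2+\tfrac{\nus\sqrt{2s}}{s}xz\big)$.

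Next I would eliminate the operator via the conjugation identity $\nabla_{\nu,a}^n=(-1)^n e^{\nu|z|^2-az^2}\,\partial_z^n\,e^{-\nu|z|^2+az^2}$, which follows by telescoping from the first-order formula $\nabla_{\nu,a}=-e^{\nu|z|^2-az^2}\partial_z\,e^{-\nu|z|^2+az^2}$ (a direct computation, in the same spirit as the conjugation identities used in the proof of Theorem \ref{OrthBasis}). Taking $a=\alpha-\tfrac12=\tfrac{(1-s)^2}{4s}$, the factor $e^{-\nu|z|^2+(\alpha-\frac12)z^2}$ merges with the $z^2$-term of $\Phi$ into exactly $e^{-\nu|z|^2-\frac\nu2 z^2+\frac{\nus\sqrt{2s}}{s}xz}$ --- the crucial numerology being $(\alpha-\tfrac12)-\tfrac{1-s}{2s}=-\tfrac\nu2$ --- so that $\partial_z^n$ acts on a Gaussian of precisely the shape occurring in the definition of $I_n^{\nu,-\nu/2}(z,\bz\,|\,\tfrac{\nus\sqrt{2s}}{s}x)$. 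Restoring the prefactors $e^{\nu|z|^2-(\alpha-\frac12)z^2}$ and $e^{-z^2/2}$ and collecting exponents by means of $\nu=\tfrac{1-s^2}{2s}$ --- in particular $-(\alpha-\tfrac12)-\tfrac\nu2=-\tfrac{1-s}{2s}$ and $-\tfrac12-\tfrac{1-s}{2s}=-\tfrac1{2s}$ --- turns the exponential into $e^{-\frac1{2s}z^2-\frac{\nu(1-s)}{2s}x^2+\frac{\nus\sqrt{2s}}{s}xz}$, the two $(-1)^n$ factors cancel, and the scalar reduces to $(\nu/\pi s)^{1/4}\big((1-s^2)/(2\pi s\nu^n n!)\big)^{1/2}$; this is precisely \eqref{closed}.

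For the last assertion, note first that for each fixed $z$ the series defining $S^s_n(\cdot,z)$ converges in $L^{2,\nu}(\R)$, because $\sum_m\|g^\nu_m\|^2_{L^{2,\nu}(\R)}|\psi^s_{m,n}(z,\bz)|^2=\sum_m|\psi^s_{m,n}(z,\bz)|^2=K^s_n(z,z)<\infty$ by Theorem \ref{ExRepKer}. In particular $S^s_n(\cdot,z)\in L^{2,\nu}(\R)$, so $\mathscr{S}^s_n f(z)=\langle f,\overline{S^s_n(\cdot,z)}\rangle_{L^{2,\nu}(\R)}$ is well defined for every $f\in L^{2,\nu}(\R)$ and obeys $|\mathscr{S}^s_n f(z)|\leq\|f\|_{L^{2,\nu}(\R)}\sqrt{K^s_n(z,z)}$. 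Pairing against the norm-convergent series and invoking orthonormality of $\{g^\nu_m\}$ yields $\mathscr{S}^s_n g^\nu_m=\psi^s_{m,n}$, hence by linearity and the bound above $\mathscr{S}^s_n\big(\sum_m c_m g^\nu_m\big)=\sum_m c_m\psi^s_{m,n}$. Since $\{g^\nu_m\}_m$ and $\{\psi^s_{m,n}\}_m$ are orthonormal bases of $L^{2,\nu}(\R)$ and $\Xns$ respectively (the latter by Theorem \ref{OrthBasis}), $\mathscr{S}^s_n$ preserves norms and is surjective, i.e. it is a unitary --- in particular isometric --- transform from $L^{2,\nu}(\R)$ onto $\Xns$.

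The only genuinely delicate part is the exponent-and-constant bookkeeping in the second paragraph; the Mehler summation, the operator conjugation identity, and the orthonormal-basis argument are routine. One could alternatively obtain \eqref{closed} by factoring $\mathscr{S}^s_n$ through $\mathscr{B}_s$ and $\mathscr{W}^s_n$ after a unitary rescaling $L^{2,\nu}(\R)\to L^2(\R)$, but the direct computation above seems shorter.
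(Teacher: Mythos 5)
Your proposal is correct and follows essentially the same route as the paper's proof: sum the series $\sum_m g^\nu_m(x)\psi^s_{m,n}(z,\bz)$, apply the complex Mehler formula \eqref{MehlerkernelHnsigma} with $\lambda=\sqrt{(1-s)/(1+s)}$, convert $\nabla_{\nu,\alpha-\frac12}^n$ via the conjugation identity $\nabla_{\nu,\gamma}^n f=(-1)^n e^{\nu|z|^2-\gamma z^2}\partial_z^n(e^{-\nu|z|^2+\gamma z^2}f)$ into the polynomial $I_n^{\nu,-\nu/2}$, and deduce the isometry from the correspondence of the orthonormal bases $\{g^\nu_m\}$ and $\{\psi^s_{m,n}\}$. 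You merely make explicit the convergence and coherent-state arguments that the paper dismisses as standard; the exponent bookkeeping checks out.
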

 
 \begin{proof}
We need only to prove the closed formula \eqref{closed}
for $S^s_n(x,z)$. The rest holds true for general coherent state transformations on the reproducing kernel Hilbert spaces likes $\Xns$. Indeed, starting from \eqref{basis2nu}  and \eqref{nesfct} and applying the Mehler formula \eqref{MehlerkernelHnsigma} the expression of $S^s_n(x,z)$ reduces further to
\begin{align*}
S^s_n(x,z)   
&= 
\left(\frac{\nu}{\pi s}\right)^{\frac{1}{4}}
\left( \frac{1-s^2}{2\pi s  \nu^n n! }  \right)^{1/2}
e^{-\frac{z^2}{2} - \frac{\nu(1-s)}{2s} x^2  }  
\nabla_{\nu,\alpha-\frac 12}^{n_z}  
\exp\left(  
- \frac{1-s}{2s}  z^2 +  \frac{\nus\sqrt{2s}}{s} x z \right)   .
\end{align*}
Using the fact $\nabla_{\nu,\gamma} f = - e^{\nu|z|^2 -\gamma z^2} \partial_z \left( e^{-\nu|z|^2 +\gamma z^2} f\right) $, we get $$\nabla_{\nu,\gamma}^n f = (-1)^n e^{\nu|z|^2 -\gamma z^2} \partial_z^n \left( e^{-\nu|z|^2 +\gamma z^2} f\right) $$
 by induction, and therefore
 \begin{align*}
S^s_n(x,z) 
&= 
\left(\frac{\nu}{\pi s}\right)^{\frac{1}{4}}
\left( \frac{1-s^2}{2\pi s  \nu^n n! }  \right)^{1/2}
e^{ \nu|z|^2  -   \alpha   z^2 - \frac{\nu(1-s)}{2s} x^2} 
(-1)^n  
\partial_z^n \left(  e^{-\nu|z|^2  -\frac \nus 2  z^2 +  \frac{\nus\sqrt{2s}}{s} x z} \right) 
\end{align*}  
Subsequently
\begin{align*}
S^s_n(x,z)=
\left(\frac{\nu}{\pi s}\right)^{\frac{1}{4}}
\left( \frac{1-s^2}{2\pi s  \nu^n n! }  \right)^{1/2}
e^{  - \frac{1}{2s} z^2 - \frac{\nu(1-s)}{2s} x^2+ \frac{\nus\sqrt{2s}}{s} x z}
I^{\nu, -\frac \nus 2 }_n\left( z,\bz \Big | \frac{\nus\sqrt{2s}}{s} x \right) .
\end{align*} 	
 \end{proof}

 \section{Concluding remarks}
  In the previous section the space  $\Xns $ are realized as the image of $\Xs $ the integral transform $\mathcal{W}^s_{n}$ or also as the image of $L^{2,\nu}(\R)$ by the generalized Segal--Bargmann transform $\mathscr{S}^s_n$. Another realization of $\Xns $ is by considering 
 the $n$-th standard Segal--Bargmann transform \cite{BenahmadiG2019}
 $$\mathscr{B}^{\nu}_n\varphi(z)= \frac{\left(\frac{\nu}{\pi}\right)^{\frac{3}{4}}}{\sqrt{2^n\nu^nn!}}\int_{\mathbb{R}}e^{-\nu(x-\frac{z}{\sqrt{2}})^2}H^\nu_n\left( \frac{z+\overline{z}}{\sqrt{2}}-x\right) \varphi(x)dx$$
 from $L^{2,\nu}(\R)$ onto $\Fdnusn$.
 Indeed, one has to deal with $  \mathscr{B}'_{\nu,n}: L^{2,\nu}(\R) \longrightarrow \Xns $, 
 \begin{align*}
 \mathscr{B}'_{\nu,n} f(z,\bz) = \left( M_{-\alpha}\mathscr{B}^{\nu}_n f \right) (z,\bz).
 \end{align*}
  It is clear that for every fixed $b$, the functions $[\mathscr{B}'_{\nu,n}]^{-1}\psi_{m,n}$ form an orthonormal basis of $ L^{2,\nu}(\R)$. But, there is no clear evidence if they are the same or not. We claim that $\left( [\mathscr{B}'_{\nu,n}]^{-1}\psi_{m,n}\right) $ do not depend of $n$. The corresponding Poisson kernel can be given explicitly leading to a nontrivial $1d$-fractional Fourier transform for the Hilbert space $L^{2,\nu}(\R)$.

   
\end{document}
\newpage 

 \section{A $1d$-fractional Fourier transform for the Hilbert space $L{^2,\nu}(\R)$}
 
 In the previous section the space  $\Xns $ are realized as the image of $\Xs $ the integral transform $\mathcal{W}^s_{n}$ or also as the image of $L^{2,\nu}(\R)$ by the generalized Segal--Bargmann transform $\mathscr{S}^s_n$. Another realization of $\Xns $ is by considering 
 the $n$-th standard Segal--Bargmann transform \cite{BenahmadiG2019}
 $$\mathscr{B}^{\nu}_n\varphi(z)= \frac{\left(\frac{\nu}{\pi}\right)^{\frac{3}{4}}}{\sqrt{2^n\nu^nn!}}\int_{\mathbb{R}}e^{-\nu(x-\frac{z}{\sqrt{2}})^2}H^\nu_n\left( \frac{z+\overline{z}}{\sqrt{2}}-x\right) \varphi(x)dx$$
 from $L^{2,\nu}(\R)$ onto $\mathcal{F}^{2,\nu}$.
 Indeed, one has to deal with $  \mathscr{B}'_{\nu,n}: L^{2,\nu}(\R) \longrightarrow \Xns $, 
  \begin{align*}
 \mathscr{B}'_{\nu,n} f(z,\bz) = \left( M_{-\alpha}\mathscr{B}^{\nu}_n f \right) (z,\bz).
\end{align*}

It is clear that for every fixed $n$, the functions $[\mathscr{B}'_{\nu,n}]^{-1}\psi_{m,n}$ form an orthonormal basis of $ L^{2,\nu}(\R)$. But, there is no clear evidence if they are the same or not. The next result provides an answer as well as their explicit expression in terms of the  generalized Hermite polynomials of the Gould-Hopper type defined by \cite{Dattoli1994}
$$H_n(x,y)=n!\sum\limits_{m=n}^{\lfloor n/2 \rfloor}\frac{H_{n-2m}(x)}{(n-2m)!} \frac{(1-y)^m}{m!}.$$ 

\begin{theorem}\label{thmbasisR}
We have
\begin{align*}
\left( [\mathscr{B}'_{\nu,n}]^{-1}\psi_{m,n}\right)  (x)&=\frac{\pi}{\nu} \left(\frac{2}{\nu}\right)^{\frac{m}{2}}e^{\frac{\alpha-\frac{1}{2}}{2\nu^2}(-\partial_x+2\nu x)^2}(H_m(x,1+\frac{2}{\nu}))\\
&=\left(\frac{\pi}{\nu}\right)^{\frac{1}{4}}\frac{1}{\sqrt{1+\frac{2\alpha-1}{\nu}}}\left(\frac{\sqrt{2r}}{\nu}\right)^ne^{(r-\nu) x^2}H_n\left( \sqrt{r}x,1+\frac{\nu^2}{2r} \right)  .
\end{align*}
\end{theorem}

\begin{proof} 
	 By Theorem \ref{OrthBasis} and the defintion of $\mathscr{B}'_{\nu,n}:=  M_{-\alpha}\mathscr{B}^{\nu}_n   $, we obtain 
	\begin{align*}
	[\mathscr{B}'_{\nu,n}]^{-1}(\psi^s_{m,n}) 	(t) 
					& =  \left( \frac{1-s}{\pi2^m m! \nu^n n!  \sqrt{s}} \right)^{1/2} 
					\left(\frac{1-s}{1+s}\right)^{m/2} 
					 [\mathscr{B}^{\nu}_n]^{-1} \left(e^{\left( \alpha-\frac{1}{2}\right) z^2 }  \nabla_{\nu,\alpha-\frac 12}^n H_{m}  \right) (t)	
					 	\\
					 & =  \left( \frac{1-s}{\pi2^m m! \nu^n n!  \sqrt{s}} \right)^{1/2} 
					 \left(\frac{1-s}{1+s}\right)^{m/2} 
					 [\mathscr{B}^{\nu}_n]^{-1} \left(e^{\left( \alpha-\frac{1}{2}\right) z^2 }  \nabla_{\nu,\alpha-\frac 12}^n H_{m}  \right) (t)	
			\end{align*}

	Direct computation yields
\begin{align*}
[\mathscr{B}'_{\nu,n}]^{-1}(\psi_{i,n})&=\left(\frac{\pi}{\nu}\right)^{\frac{1}{4}}n!\sum\limits_{j=0}^{\left\lfloor \tfrac{n}{2} \right\rfloor}\frac{(-1)^j\sqrt{2}^{n-2j}}{j!(n-2j)!\nu^{n-2j}}\sum\limits_{m=0}^{+\infty}\frac{(\alpha-\frac{1}{2})^m}{2^m\nu^{2m}m!}H_{2m+n-2j}^\nu(x)\\
   &=\left(\frac{\pi}{\nu}\right)^{\frac{1}{4}}n!\sum\limits_{j=0}^{\left\lfloor \tfrac{n}{2} \right\rfloor}\frac{(-1)^j\sqrt{2}^{n-2j}}{j!(n-2j)!}\nu^{n-2j}(-\partial_x+2\nu x)^{n-2j}\sum\limits_{m=0}^{+\infty}\frac{(\alpha-\frac{1}{2})^m}{2^m\nu^{2m}m!}H_{2m}(\sqrt{\nu} x)\\
   \end{align*}
By means of the formula in \cite{Dattoli2017} 
   $$\sum\limits_{n=0}^{\infty}\frac{t^n}{n!}H'_{2n}(x,y)=\frac{1}{\sqrt{1-4yt}}e^{\frac{x^2t}{1-4yt}} $$
   combined with the fact
  $H'_{n}(2x,-y)=H_n(x,y)
  =y^{\frac{n}{2}}H_n(xy^{-\frac{1}{2}})$,  
   we obtain 
   \begin{align*}
   T^{-1}(\psi_{i,n})&=\left(\frac{\pi}{\nu}\right)^{\frac{1}{4}}\frac{n!}{\sqrt{1+\frac{2\alpha-1}{\nu}}}  \sum\limits_{j=0}^{\left\lfloor \tfrac{n}{2} \right\rfloor}\frac{(-1)^j\sqrt{2}^{n-2j}}{j!(n-2j)!\nu^{n-2j}}(-\partial_x+2\nu x)^{n-2j}exp\left[\frac{\nu(\alpha -\frac{1}{2})x^2}{4(1+4\frac{\alpha -\frac{1}{2}}{2\nu})}\right].
   \end{align*}
The use if Rodriguez type formula for the real Hermite polynomials infers  
   \begin{align*}
    T^{-1}(\psi_{i,n})&=\left(\frac{\pi}{\nu}\right)^{\frac{1}{4}}\frac{n!}{\sqrt{1+\frac{2\alpha-1}{\nu}}}e^{(r-\nu) x^2}  \sum\limits_{j=0}^{\left\lfloor \tfrac{n}{2} \right\rfloor}\frac{(-1)^j\sqrt{2}^{n-2j}}{j!(n-2j)!\nu^{n-2j}}\sqrt{r}^{2n-j} H_{2n-j}(\sqrt{r}x)
    \\&= \left(\frac{\pi}{\nu}\right)^{\frac{1}{4}}\frac{1}{\sqrt{1+\frac{2\alpha-1}{\nu}}}\left(\frac{\sqrt{2r}}{\nu}\right)^ne^{(r-\nu) x^2}H_n(\sqrt{r}x,1+\frac{\nu^2}{2r} ) ,
   \end{align*}
   where we have set $r=-\left(\frac{\nu(\alpha -\frac{1}{2})}{4(1+4\frac{\alpha -\frac{1}{2}}{2\nu})}+\nu\right)$.
\end{proof}

\begin{remark}
The previous result  provide a nontrivial orthonormal basis for the configuration space $L^{2,\nu}(\R)$.
\end{remark}

We conclude this paper by considering a nontrivial $1d$-fractional Fourier transform for $L^2(\R)$.   .
  
  \begin{theorem} Fix  $r$ as above and set $\rho=\frac{2r(1-s)t}{\nu^2(1+s)}$ $a=\frac{2r}{1+\nu^2}$. Then, the integral transform $$ \mathcal{F}f(y)=\left(\frac{\pi}{\nu}\right)^{\frac{1}{2}}\frac{1}{1+\frac{2\alpha-1}{\nu}}\frac{\pi \sqrt{s}}{1-s} \frac{1}{\sqrt{1-\rho^2}}\int_{\R} f(x) exp\left[\frac{(a\rho^2(x^2+y^2)-2a\rho xy }{1-\rho^2}\right]e^{(r-\nu)(x^2-y^2)-\nu x^2}dxdy $$
  	defines a nontrivial $1d$-fractional Fourier transform for $L^2(\R)$
  \end{theorem}
  
  \begin{proof}
  	 Recall from Theorem \ref{thmbasisR} that the functions 
  	\begin{align*}
  	\widetilde{f_n}(x)
  	=\left(\frac{\pi}{\nu}\right)^{\frac{1}{4}}
  	\sqrt{\frac{\nu}{\nu+2\alpha-1}} \left(\frac{\sqrt{2r}}{\nu}\right)^ne^{(r-\nu) x^2}H_n\left( \sqrt{r}x,1+\frac{\nu^2}{2r} \right) 
  	\end{align*}
  	form an orthonormal basis of $L^{2,\nu}(\R)$.
  	The corresponding Poisson kernel $$ S(x,y)=\sum\limits_{n=0}^{+\infty} t^n \widetilde{f_n}(x)\overline{\widetilde{f_n}}(y) $$ for the orthonormal set $ \widetilde{f_n} := [\mathscr{B}^\nu]^{-1} \left(  M^{-\alpha} \psi^s_n\right)  $ in  can be shown to be given by
  	\begin{align*}
  	S(x,y)&= \left(\frac{\pi}{\nu}\right)^{\frac{1}{2}}\frac{1}{1+\frac{2\alpha-1}{\nu}}e^{(r-\nu) (x^2+y^2)}\sum\limits_{n=0}^{+\infty}\left(\frac{2r}{\nu^2}\right)^n\frac{H_n(\sqrt{r}x,1+\frac{\nu^2}{2r} ) H_n(\sqrt{r}y,1+\frac{\nu^2}{2r} ) }{\frac{\pi \sqrt{s}}{1-s} \left(\frac{1+s}{1-s}\right)^n 2^n n!}t^n\\
  	&= \left(\frac{\pi}{\nu}\right)^{\frac{1}{2}}\frac{1}{1+\frac{2\alpha-1}{\nu}}\frac{\pi \sqrt{s}}{1-s}e^{(r-\nu) (x^2+y^2)}\sum\limits_{n=0}^{+\infty}\left(\frac{2r}{\nu^2}\right)^n\frac{H_n(\sqrt{r}x,1+\frac{\nu^2}{2r} ) H_n(\sqrt{r}y,1+\frac{\nu^2}{2r} ) }{ \left(\frac{1+s}{1-s}\right)^n 2^n n!}t^n.
  	\end{align*}
  	Thanks to Theorem \ref{basisR}. Next, we use the fact  that \cite{} $ H_n(x,y)=y^{\frac{n}{2}}H_n(xy^{-\frac{1}{2}})$ to  get 
  	\begin{align*}
  	S(x,y)&=\left(\frac{\pi}{\nu}\right)^{\frac{1}{2}}\frac{1}{1+\frac{2\alpha-1}{\nu}}\frac{\pi \sqrt{s}}{1-s}e^{(r-\nu) (x^2+y^2)}\sum\limits_{n=0}^{+\infty}\frac{H_n(\sqrt{r}x(1+\frac{\nu^2}{2r})^{-\frac{1}{2}} ) H_n(\sqrt{r}y(1+\frac{\nu^2}{2r})^{-\frac{1}{2}} ) }{ \left(\frac{1+s}{1-s}\right)^n 2^n n!}((1+\frac{\nu^2}{2r})t)^n.
  	\end{align*}
  	Finally, we conclude by making appeal to the classical Mehler formula \eqref{MehlerkernelHnsigma}.
  \end{proof}

\begin{thebibliography}{99}
   	


   	\bibitem{BenahmadiG2018} Benahmadi A.,  Ghanmi A.,
      	Non-trivial 1d and 2d Segal–Bargmann transforms. Integral Transforms Spec. Funct. 30 (2019), no. 7, 547--563.

  	\bibitem{BenahmadiG2019} Benahmadi A.,  Ghanmi A.,
  On a novel class of polyanalytic Hermite polynomials. Preprint



   	
%
   	
   	\bibitem{DattoliLorenzuttaMainoTorre1997} Dattoli G., Lorenzutta S., Maino G., Torre A., Theory of multiindex multivariable Bessel functions and Hermite polynomials. 
   	Le Matematiche, Vol. LII (1997)  Fasc. I, pp. 177--195
   	

   	\bibitem{Folland89}  Folland G B.,
   	{\it Harmonic analysis in phase space}.
   	Princeton university press, New Jersey, 1989.
   	
   	   	\bibitem{Gh13ITSF}  Ghanmi A.,
   	{\it Operational formulae for the complex Hermite polynomials $H_{p,q}(z, \bz)$}.
   	{Integral Transforms Spec. Funct.},  Volume 24, Issue 11 (2013) pp  884-895.
   	
   	\bibitem{Gh2017} Ghanmi A.,
   	Mehler's formulas for the univariate complex Hermite polynomials and applications. Math. Methods Appl. Sci. 40 (2017), no. 18, 7540--7545.
   	
    \bibitem{GhIn2005JMP}  Ghanmi A., Intissar A.,
   	       {  Asymptotic of complex hyperbolic geometry and $L^2$-spectral analysis of Landau-like Hamiltonians},
   	       J. Math. Phys. 46 (2005), no. 3, 032107, 26 pp.
   	

   	
   	
   	\bibitem{Ito52}  It\^o K.,
   	{\it Complex multiple Wiener integral}.
   	{ Jap. J. Math.}, 22 (1952) 63-86


   	\bibitem{Mehler1866} Mehler F.G.
   	{Ueber die Entwicklung einer Function von beliebig vielen Variabeln nach Laplaceschen Functionen h\"oherer Ordnung}.
   	{\it J. Reine Angew. Math.} 1866; 66:161--176.

   	\bibitem{Rainville71}  Rainville E.D.,
   	{\it Special functions}, Chelsea Publishing Co., Bronx, N.Y., 1971.	

   	\bibitem{Szego75}  Szeg\"o G.,
   	{\it Orthogonal polynomials. Fourth edition},
   	American Mathematical Society, Providence, R.I., 1975.

   	\bibitem{Thangavelu93}  Thangavelu S.,
   	{\it Lectures on Hermite and Laguerre Expansions}.
   	Princeton University Press, 1993.
   	
   	\bibitem{Van1990} van Eijndhoven S.J.L.,  Meyers J.L.H.,
   	{\it New orthogonality relations for the Hermite polynomials and related Hilbert spaces}.
   	J. Math. Anal. Appl. 146 (1990), no. 1, 89--98.

   \end{thebibliography}
\end{document}